\newtheorem{thm}{Theorem}[section]
\newtheorem{lema}[thm]{Lemma}
\newtheorem{cor}[thm]{Corollary}
\newtheorem{defi}[thm]{Definition}
\newcommand{\DF}[2]{{\displaystyle\frac{#1}{#2}}}
\makeatletter\@addtoreset{equation}{section}\makeatother 
\titleformat{\section}{\centering\normalsize}{\textsc{\thesection.}}{1em}{\textsc}
\begin{document}

\title{\bf\large  THE HAUSDORFF DIMENSION OF THE BOUNDARY OF THE
IMMEDIATE BASIN OF INFINITY OF McMULLEN MAPS
\author{\normalsize{FEI YANG ~AND~ XIAOGUANG WANG}}}
\date{}
\maketitle


\begin{center}
\begin{minipage}{13cm}{\small{\textsc{Abstract.}}}{\small{
 In this paper, we give a formula of the Hausdorff dimension of the boundary of the immediate basin of
 infinity of McMullen maps $f_p(z)=z^Q+p/z^Q$, where $Q\geq 3$ and
 $p$ is small.
 This gives a lower bound of the Hausdorff dimension of the Julia
 sets of McMullen maps in the special cases.
 } }
\end{minipage}
\end{center}


\section{Introduction}

The dynamics of McMullen maps
\begin{equation*}
f_p(z)=z^Q+p/z^Q
\end{equation*}
with $Q\geq 3$ have been studied a lot (\cite{Dev,DG,DLU,St,WQY}).
These special rational maps can be viewed as a perturbation of the
simple polynomial $f_0(z)=z^Q$.

It is known from \cite{Dev,McM} that for small $p$, the Julia set
$J(f_p)$ of $f_p$ consists of uncountably many Jordan curves about
the origin. This kind of Julia set is homeomorphic to $\mathcal
{C}\times \mathbb{S}$, where $\mathcal {C}$ is the middle third
Cantor set and $\mathbb{S}$ is the unit circle (See Figure 1). These
Julia sets are called \textit{Cantor circles}. In this case, all
Fatou components are attracted by $\infty$. We denote by $B_p$ the
immediate attracting basin of $\infty$, then the boundary $\partial
B_p$ is a Jordan curve (actually quasicircle by Lemma
\ref{holo-motion}). In fact, it is proven in \cite{WQY} that
$\partial B_p$ is always a Jordan curve if $J(f_p)$ is not a Cantor
set. In this paper, we obtain the following main theorem:

\begin{thm}\label{main}
Let $Q\geq 3$, then for small $p$ such that $J(f_p)$ is a Cantor
circle, the Haudorff dimension of $\partial B_p$ is
\begin{equation}\label{dim_formula}
\textup{dim}_H(\partial B_p)=1+\DF{|p|^2}{\log
Q}+\mathcal{O}(|p|^3).
\end{equation}
In particular, if $Q\neq 4$, then the higher order
$\mathcal{O}(|p|^3)$ can be replaced by $\mathcal{O}(|p|^4)$.
\end{thm}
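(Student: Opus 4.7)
The approach is to apply Bowen's formula together with perturbation theory for the topological pressure, built on the holomorphic motion provided by Lemma~\ref{holo-motion}. That lemma yields a conjugacy $H_p\colon \mathbb{S}^1 \to \partial B_p$ with $H_0 = \mathrm{id}$ satisfying $f_p \circ H_p = H_p \circ g$, where $g(z) = z^Q$, and $H_p$ depends holomorphically on $p$. Since $f_p$ is hyperbolic on $\partial B_p$, Bowen's formula gives $\textup{dim}_H(\partial B_p) = s(p)$, where $s(p)$ is the unique zero of $P(s, p) := P(-s\log|f_p'|;\, f_p|_{\partial B_p})$; topological invariance of pressure rewrites this as $P(-s\log|f_p' \circ H_p|;\, g)$, reducing the analysis to $(\mathbb{S}^1, g)$.

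First I would expand $H_p(z) = z + p H_1(z) + p^2 H_2(z) + \cdots$ (holomorphic in $p$) and match orders in the conjugacy equation. The first-order equation
$$H_1(z^Q) - Q z^{Q-1} H_1(z) = z^{-Q}$$
has Laurent-series solution $H_1(z) = -z \sum_{n \geq 1} z^{-2Q^n}/Q^n$. Substituting and expanding $\log|f_p'(H_p(z))|$ on $\mathbb{S}^1$ gives
$$\log|f_p'(H_p(z))| = \log Q + \operatorname{Re}(p\, \Phi_1(z)) + \operatorname{Re}(p^2 \Phi_2(z)) + \mathcal{O}(|p|^3),$$
where $\Phi_1$ is a Laurent polynomial supported on the lacunary Fourier modes $\{z^{-2Q^k}\}_{k \geq 1}$; crucially, no diagonal $|p|^2$ term appears, because $\log|1+u|$ produces none when $u$ is holomorphic in $p$. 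Since $\int_{\mathbb{S}^1}\Phi_1\, d\mu = 0$ by Fourier orthogonality (with $\mu$ normalized Lebesgue measure, i.e., the equilibrium state of $g$ at $s=1$), the implicit function theorem applied to $P(s(p),p) = 0$ gives $\partial_p s|_0 = \partial_{\bar p}s|_0 = 0$.

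The coefficient of $|p|^2$ in $s(p)$ is then obtained from Ruelle's second-order perturbation formula,
$$\partial_p \partial_{\bar p}\, s|_0 \,=\, \frac{1}{\log Q}\, \sigma^2(\partial_p \phi|_0),$$
where $\sigma^2$ denotes the asymptotic variance of Birkhoff sums under $g$. Because the modes of $\Phi_1$ are the lacunary frequencies $\{-2Q^k\}$ and their iterates under $g$ live on disjoint frequency sets, a direct calculation gives $\int|S_n \Phi_1|^2\, d\mu = 4n + O(1)$; combined with the $1/4$ prefactor from $\partial_p \phi = -\Phi_1/2$ this produces $\sigma^2(\partial_p \phi|_0) = 1$, hence $s(p) = 1 + |p|^2/\log Q + \mathcal{O}(|p|^3)$. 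To sharpen the error to $\mathcal{O}(|p|^4)$ for $Q \neq 4$, I would exploit the discrete rotational symmetry of the family: $z \mapsto \omega z$ with $\omega^{Q-1}=1$ conjugates $f_p$ to $f_{p\omega^{-(Q+1)}}$ isometrically, so $s(p) = s(p\omega^{-(Q+1)})$. Together with the complex-conjugation symmetry $s(\bar p) = s(p)$ (forcing real Taylor coefficients) and a direct computation in the cases not covered by rotation, this eliminates every degree-three Taylor coefficient of $s(p)$ except when $Q = 4$, where the residual three-fold symmetry permits a term $\operatorname{Re}(c p^3)$.

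The main obstacle is the variance computation underlying the $|p|^2$ coefficient: one must carefully unpack the second derivatives of the pressure, verify the mean-zero condition on $\Phi_1$, and carry out the lacunary-Fourier summation to confirm that everything collapses to exactly $1/\log Q$. Although the combinatorics is transparent thanks to the lacunary structure of $\Phi_1$, the algebraic bookkeeping --- including verifying the absence of a $|p|^2$ diagonal contribution in the potential --- is delicate.
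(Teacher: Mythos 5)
Your overall strategy---Bowen's formula plus second-order perturbation of the topological pressure on the model system $(\mathbb{S},z\mapsto z^Q)$, transported there by the conjugacy coming from the holomorphic motion---is a legitimate repackaging of what the paper does by hand. The paper avoids pressure language entirely: it discretizes Bowen's formula into the fixed-point sum (\ref{O(1)=}) via Falconer's IFS bounds (Theorem \ref{IFS}) together with Koebe distortion, and then computes that sum directly. Your conjugacy coefficient $H_1(z)=-z\sum_{n\ge1}z^{-2Q^n}/Q^n$ is exactly the paper's $zU_1(t)$ from (\ref{solution_1}), your $\Phi_1$ is the paper's $A_0=(Q-1)U_{1,0}-\sigma^{-2Q}$, and your asymptotic variance $\lim n^{-1}\int|S_n\Phi_1|^2\,d\mu$ is precisely the paper's sum $\sum_{m,k}\langle A_m\overline{A}_k\rangle_n=4n+\mathcal{O}(1)$ of (\ref{A_m_A_k}). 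Your observation that the potential carries no diagonal $|p|^2$ term, so that the $|p|^2$ coefficient of the dimension comes entirely from the variance, is correct and mirrors the structure of (\ref{formu})--(\ref{f_result_1}). So the headline coefficient $1/\log Q$ is reached by an equivalent, arguably cleaner, route.

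There are, however, two concrete gaps. First, your justification of $\int|S_n\Phi_1|^2\,d\mu=4n+O(1)$ --- that the iterated frequency sets are ``disjoint'' --- is false as stated and would yield the wrong constant. If the frequencies $-2Q^{k+m}$ were pairwise distinct across $m$, Parseval would give $n\sum_k|c_k|^2$ with $c_1=-(2Q-1)/Q$ and $c_k=-(Q-1)/Q^k$ for $k\ge2$, which is $\tfrac{17}{6}n$ for $Q=3$, not $4n$. The correct answer comes precisely from the resonances $k+m=k'+m'$ with $m\neq m'$: the Birkhoff sum deposits total weight $\sum_k c_k=-2$ on each of roughly $n$ lacunary frequencies, giving $(\sum_k c_k)^2=4$. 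This resonance bookkeeping is the whole content of the paper's computation (\ref{U(1,m)BarU(1,k)})--(\ref{sigma_u}) and cannot be bypassed. Second, your symmetry argument for sharpening $\mathcal{O}(|p|^3)$ to $\mathcal{O}(|p|^4)$ when $Q\neq4$ is too weak. Conjugating the first iterate by $z\mapsto\omega z$ with $\omega^{Q-1}=1$ only yields $s(p)=s(\omega^{-2}p)$, and the set $\{\omega^{-2}\}$ is merely the subgroup of squares in the $(Q-1)$-th roots of unity: for $Q=3$ this is trivial (no constraint at all), and for $Q=7$ it consists of the cube roots of unity, which do not exclude the coefficient $a_{30}$ of $p^3$. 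The paper gets the full invariance $s(e^{2\pi i/(Q-1)}p)=s(p)$ by conjugating the \emph{second} iterate with a $2(Q-1)$-th root of unity, which together with $s(\bar p)=s(p)$ kills every cubic coefficient unless $Q-1$ divides $3$, i.e.\ unless $Q=4$. Your fallback of ``a direct computation in the cases not covered by rotation'' is not a workable plan: it would require the third derivative of the pressure, hence $U_2$, $U_3$ and third-order correlation sums---exactly the computation the symmetry exists to avoid. (For the same reason, in the case $Q=3$ you should also verify that the non-diagonal second-order coefficient $a_{20}$ vanishes; in your framework this follows because all frequencies appearing holomorphically in $p$ are strictly negative, while the paper needs Lemma \ref{integer1} and Corollary \ref{no_p^k} for the discretized averages.)
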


As an immediate corollary, the main theorem gives a lower bound of
the Hausdorff dimension of $J(f_p)$ with small $p$.

\begin{figure}[!htpb]
  \setlength{\unitlength}{1mm}
  \centering
  \includegraphics[width=60mm]{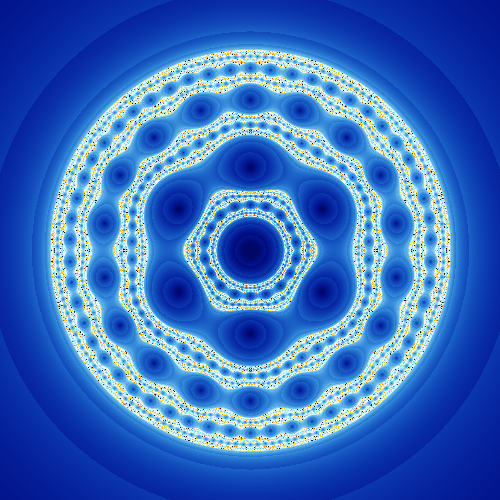}
  \includegraphics[width=60mm]{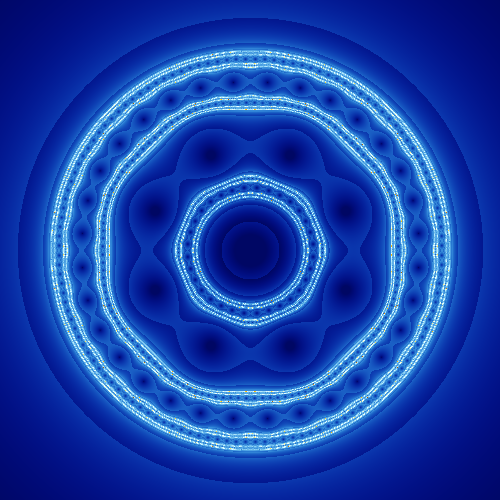}
  \begin{minipage}{14cm}
  \caption{ The Julia sets of $f_p(z)=z^Q+p/z^Q$, where $p=0.005$ and $Q=3,4$
  respectively.
  Both are Cantor circles. Figure range: $[-1.25,1.25]\times[-1.25,1.25]$.}
  \label{Fig_McMullen}
  \end{minipage}
\end{figure}

We would like to mention that for the polynomials $P_c(z)=z^d+c$
with $d\geq 2$ and small $c$ such that $P_c$ is hyperbolic, the
Hausdorff dimension of the Julia set of $P_c$ has been calculated
in\cite{Rue},\cite{WBK} and\cite{CDM}, where the dimensional formula
was expanded to the second order, third order and fourth order in
$c$, respectively. In theory, terms of higher orders can be
calculated successively. However, the calculation become more
complicated as the rising of order.

\section{Proof of the Main Theorem}
The proof of the main theorem is similar to that in\cite{Osb}
and\cite{WBK}. All details of complicated calculations will be
included in the next section. In the following, we always assume
that $p$ is small\,($p=0$ is allowed).

Unlike the polynomials $P_c(z)=z^d+c$, the parameter space of
McMullen family has a special point at $p=0$. The whole Julia set
$J(f_p)$ does not converge to $J(f_0)$ (the unit circle
$\mathbb{S}$) in Hausdorff topology when $p$ tends to $0$, see
\cite{DG}. However,
 the boundary of the immediately attracting basin of infinity
$\partial B_p$ does. In fact, we can show (Lemma \ref{holo-motion})
that  $\partial B_p$ is  a holomorphic motion of the unit circle
$\mathbb{S}$. For this, we first recall the definition of
holomorphic motion.

\begin{defi}[{Holomorphic Motion,\cite{MSS}}]\label{Def_Holo_motion}
\textup{Let $E$ be a subset of $\widehat{\mathbb{C}}$, a map
$h:\mathbb{D}\times E\rightarrow\widehat{\mathbb{C}}$ is called a
\textit{holomorphic motion} of $E$ parameterized by $\mathbb{D}$ and
with base point $0$ if}

\textup{(1) For every $z\in E$, $\beta\mapsto h(\beta,z)$ is
holomorphic for $\beta$ in $\mathbb{D}$;}

\textup{(2) For every $\beta\in\mathbb{D}$, $z\mapsto h(\beta,z)$ is
injective on $E$; and}

\textup{(3) $h(0,z)=z$ for all $z\in E$.}
\end{defi}
The unit disk $\mathbb{D}$ in Definition \ref{Def_Holo_motion} can
be replaced by any other topological disk.

\begin{thm}[{The $\lambda$-Lemma,\cite{MSS}}]\label{e-lemma}
A holomorphic motion $h:\mathbb{D}\times
E\rightarrow\widehat{\mathbb{C}}$ of $E$ has a unique extension to a
holomorphic motion $h:\mathbb{D}\times
\overline{E}\rightarrow\widehat{\mathbb{C}}$ of $\overline{E}$. The
extension is a continuous map. For each $\beta\in \mathbb{D}$, the
map $h(\beta,\cdot):E\rightarrow\widehat{\mathbb{C}}$ extends to a
quasiconformal map of the sphere to itself.
\end{thm}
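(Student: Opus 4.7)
The plan is to exploit the interplay between holomorphicity in the parameter $\beta$ and injectivity in the point $z$, with Montel's theorem on the thrice-punctured sphere as the central tool. The key observation is that for any three distinct points $z_1,z_2,z_3\in E$, the four-point cross ratio $\beta\mapsto [h(\beta,z_1),h(\beta,z_2),h(\beta,z_3),h(\beta,z)]$ is holomorphic in $\beta$ on $\mathbb{D}$ and, by injectivity, never assumes the values $0,1,\infty$ for any $z\in E\setminus\{z_1,z_2,z_3\}$. Hence this family is normal, and in fact the Schwarz--Pick lemma applied to the hyperbolic metric of $\widehat{\mathbb{C}}\setminus\{0,1,\infty\}$ gives an explicit, uniform (in $z$) modulus-of-continuity estimate depending only on $|\beta|$.

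First I would turn this Schwarz--Pick estimate into a quantitative bound: for $|\beta|\leq r<1$, the cross ratio $[h(\beta,z_1),h(\beta,z_2),h(\beta,z_3),h(\beta,z_4)]$ is controlled in terms of the initial cross ratio $[z_1,z_2,z_3,z_4]$ and $r$ alone. This yields uniform continuity of $h(\beta,\cdot)$ on $E$ and joint continuity on compact subsets of $\mathbb{D}\times E$. Next, extend $h$ to $\mathbb{D}\times\overline{E}$: for $z_*\in\overline{E}$ and a sequence $z_n\to z_*$ in $E$, the cross-ratio estimate forces $\{h(\beta,z_n)\}$ to be Cauchy uniformly on compact subsets of $\mathbb{D}$, so the limit $h(\beta,z_*)$ exists. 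By Weierstrass's theorem it is holomorphic in $\beta$. Injectivity in $z$ survives the limit because equality $h(\beta,z_*)=h(\beta,w_*)$ with $z_*\neq w_*$ would produce a holomorphic function attaining an omitted value $0$, $1$ or $\infty$, contradicting the maximum principle.

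For the quasiconformal extension of each slice $h(\beta,\cdot)$, I would use the cross-ratio bound obtained above as a quasisymmetry estimate: it shows that $h(\beta,\cdot)$ is a quasisymmetric embedding of $\overline{E}\subset\widehat{\mathbb{C}}$ with distortion controlled by a function of $|\beta|$ only. One then invokes the Ahlfors--Beurling (or Douady--Earle) quasiconformal extension theorem to produce a quasiconformal self-map of the sphere with dilatation bound of the shape $K(\beta)\leq (1+|\beta|)/(1-|\beta|)$, extending $h(\beta,\cdot)$.

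The main obstacle is keeping all estimates both explicit and independent of $E$, so that the dilatation bound depends only on $|\beta|$ and not on the geometry of $E$; this is exactly why the cross-ratio / thrice-punctured-sphere formulation is the right one. A secondary, more technical point is verifying joint continuity of the extension on $\mathbb{D}\times\overline{E}$ (as opposed to sectional continuity in each variable separately), which requires combining the Schwarz--Pick estimate in $\beta$ with the uniform modulus of continuity in $z$ and checking uniqueness of the extension against any other candidate holomorphic motion of $\overline{E}$.
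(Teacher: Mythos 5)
The paper does not prove this statement at all: it is quoted verbatim from Ma\~n\'e--Sad--Sullivan \cite{MSS} and used as a black box in Lemma \ref{holo-motion}, so there is no in-paper proof to compare against. Judged on its own terms, your sketch of the first two assertions is the standard (and correct) argument: normalizing by the cross ratio with three marked points of $E$ turns each $\beta\mapsto h(\beta,z)$ into a holomorphic map into $\widehat{\mathbb{C}}\setminus\{0,1,\infty\}$, Schwarz--Pick for the hyperbolic metric of the thrice-punctured sphere gives a modulus of continuity depending only on $|\beta|$ and the initial cross ratio, and this yields the uniformly Cauchy property needed to pass to $\overline{E}$, with holomorphy in $\beta$ by Weierstrass. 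One small correction there: injectivity of the limit is not a consequence of ``the maximum principle'' but of Hurwitz's theorem (a locally uniform limit of functions omitting $0,1,\infty$ either omits them or is constant), or, more in the spirit of your argument, of the fact that the hyperbolic distance from $\mathrm{cr}(\beta)$ to $\mathrm{cr}(0)$ is bounded by $d_{\mathbb{D}}(0,\beta)$, so a nondegenerate cross ratio at $\beta=0$ cannot reach a puncture.

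The genuine gap is in the last step. Beurling--Ahlfors and Douady--Earle extend quasisymmetric homeomorphisms of the \emph{line} or the \emph{circle}; they do not apply to a quasisymmetric embedding of an arbitrary closed set $E\subset\widehat{\mathbb{C}}$ (here $E$ could be a Cantor set, a tree, anything), and it is not true in general that a quasisymmetric map defined on an arbitrary planar compactum extends to a quasiconformal self-map of the sphere without further input. This extension is precisely the delicate part of the $\lambda$-lemma: \cite{MSS} handle it by introducing a notion of quasiconformality for maps defined on arbitrary sets (bounded circular distortion, which your cross-ratio estimate does give) and then arguing the extension separately, and the clean modern route is the Bers--Royden/S{\l}odkowski extended $\lambda$-lemma, which extends the \emph{motion itself} to $\mathbb{D}\times\widehat{\mathbb{C}}$, after which each time-$\beta$ slice is automatically $\frac{1+|\beta|}{1-|\beta|}$-quasiconformal. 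As written, your appeal to Beurling--Ahlfors does not close this step, and the dilatation bound you quote is the one produced by S{\l}odkowski's theorem rather than by any circle-extension argument.
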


It's known from \cite{St} that in the parameter space of $f_p$, the
McMullen domain $\mathcal{M}:=\{p\in \mathbb{C}-\{0\}; J(f_p) \text{
is a Cantor circle} \}$ is a deleted neighborhood of the origin. It
turns out that $\mathcal{V}=\mathcal{M}\cup\{0\}$ is a topological
disk containing $0$.

\begin{lema}\label{holo-motion} There is a holomorphic motion $H:\mathcal{V}\times \mathbb{S}\rightarrow\mathbb{C}$
parameterized by $\mathcal{V}$ and with base point $0$ such that
$H(p, \mathbb{S})=\partial B_p$ for all $p\in \mathcal{V}$.
\end{lema}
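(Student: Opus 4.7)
The strategy is to realize $H$ as the boundary restriction of the inverse Böttcher coordinate at $\infty$, and then to transfer holomorphy from the open exterior to the boundary via the $\lambda$-lemma.

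First I would construct the normalized Böttcher coordinate $\varphi_p$ at $\infty$, characterized by $\varphi_p(z)/z \to 1$ as $z \to \infty$ together with the functional equation $\varphi_p \circ f_p = \varphi_p^Q$. The classical telescoping formula $\varphi_p(z) = \lim_{n}(f_p^n(z))^{1/Q^n}$ converges locally uniformly in $(p,z)$ on $\mathcal{V} \times \{|z|>R\}$ for any sufficiently large $R$, giving joint holomorphy. Since the only critical point of $f_p$ inside $B_p$ is $\infty$---the finite critical points $\{z:z^{2Q}=p\}$ cluster near $0$ and lie in a different Fatou component for $p\in\mathcal{M}$, while the statement is trivial for $p=0$---repeated pull-backs through the functional equation extend $\varphi_p$ to a biholomorphism $\varphi_p : B_p \to \widehat{\mathbb{C}}\setminus\overline{\mathbb{D}}$, jointly holomorphic on $\mathcal{V} \times B_p$.

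Next, since $\partial B_p$ is a Jordan curve---trivially for $p=0$, and by the theorem quoted from \cite{WQY} for $p\in\mathcal{M}$---Carath\'eodory's theorem extends $\varphi_p^{-1}$ to a homeomorphism $\widehat{\mathbb{C}}\setminus\mathbb{D} \to \overline{B_p}$ carrying $\mathbb{S}$ onto $\partial B_p$. Set $E_0 := \widehat{\mathbb{C}}\setminus\overline{\mathbb{D}}$ and define $\widetilde{H}(p,w) := \varphi_p^{-1}(w)$ on $\mathcal{V}\times E_0$. This is a holomorphic motion of $E_0$ parameterized by $\mathcal{V}$ with base point $0$: the base-point identity follows from $\varphi_0=\mathrm{id}$, injectivity in $w$ follows from the biholomorphism property, and parameter-holomorphy was established above. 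The $\lambda$-lemma (Theorem~\ref{e-lemma}, with $\mathbb{D}$ replaced by the topological disk $\mathcal{V}$ as justified by the remark after Definition~\ref{Def_Holo_motion}) then extends $\widetilde{H}$ uniquely to a holomorphic motion on $\overline{E_0} = \widehat{\mathbb{C}}\setminus\mathbb{D}$; restricting to $\mathcal{V}\times\mathbb{S}$ yields the required map $H$. The identity $H(p,\mathbb{S}) = \partial B_p$ is then immediate: the $\lambda$-lemma extension is continuous and injective, and it must agree on $\mathbb{S}$ with the Carath\'eodory boundary correspondence, which sends $\mathbb{S}$ homeomorphically onto $\partial B_p$.

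The step that I expect to need the most care is the parameter-holomorphy of $\varphi_p$ near $p=0$: the family $f_p$ is genuinely singular there, because $f_p$ acquires a pole of order $Q$ at $z=0$ for $p\neq 0$ but not for $p=0$, so one cannot just invoke smooth dependence of the sphere-dynamics on parameters. The difficulty is bypassed because the Böttcher construction only uses the behaviour of $f_p$ on a fixed neighborhood of $\infty$ that is disjoint from $0$; the telescoping limit converges uniformly there for all $p\in\mathcal{V}$, and holomorphy is then propagated inward through $B_p$ by the functional equation without ever touching the pole at $0$.
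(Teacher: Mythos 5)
Your argument is correct, but it takes a genuinely different route from the paper. You build the motion from the B\"ottcher coordinate at $\infty$: joint holomorphy of $\varphi_p$ from the telescoping limit, extension to a biholomorphism $B_p\to\widehat{\mathbb{C}}\setminus\overline{\mathbb{D}}$ (which does require, as you note, that $B_p$ contains no finite critical point --- this is exactly the content of the escape trichotomy of \cite{DLU} in the Cantor-circle case, or alternatively follows from Riemann--Hurwitz since $f_p|_{B_p}$ has degree $Q$ and $\infty$ already accounts for multiplicity $Q-1$), then the $\lambda$-lemma applied to $\varphi_p^{-1}$ on the open exterior, with Carath\'eodory's theorem identifying the boundary values. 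The paper instead moves the \emph{repelling periodic points} of $f_0$ holomorphically (implicit function theorem near $p=0$, absence of non-repelling cycles on $\mathcal{M}$), applies the $\lambda$-lemma to the dense set $\textup{Fix}(f_0)\subset\mathbb{S}$, and then identifies $H(p,\mathbb{S})$ with $\partial B_p$ by tracking the landing point of the zero external ray for small real $p$ plus uniqueness of motions of hyperbolic Julia sets. The trade-off: your approach leans on the Jordan-curve property of $\partial B_p$ from \cite{WQQ-style} (the paper does quote this from \cite{WQY}) and on the critical-orbit structure, but it makes the identification $H(p,\mathbb{S})=\partial B_p$ essentially automatic via Carath\'eodory, whereas the paper's final identification step is the more delicate part of its proof; your construction also hands you for free the dynamical parameterization $f_p(z(t))=z(Qt)$ that the paper needs later. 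The paper's approach is more robust in that it only uses density of repelling cycles and would apply to components of hyperbolic Julia sets that are not Jordan curves. Both are legitimate; yours is complete modulo the standard facts you cite.
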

\begin{proof}
We first prove that every repelling periodic point  of $f_0(z)=z^Q$
moves holomorphically in $\mathcal{V}$.  Let $z_0\in\mathbb{S}$ be
such a point with period $k$. For small $p$, the map $f_p$ is a
small perturbation of $f_0$. By implicit function theorem, there is
a neighborhood $U_0$ of $0$ such that $z_0$ becomes a repelling
point $z_p$ of $f_p$ with the same period $k$, for all $p\in U_0$.
On the other hand, for all $p\in\mathcal{M}$, since $f_p$ has no
non-repelling cycles, each repelling cycle of $f_p$ moves
holomorphically throughout $\mathcal{M}$ (See Theorem 4.2 in
\cite{McM1}).

Since $\mathcal{V}$ is simply connected, there is a holomorphic map
$Z:\mathcal{V}\rightarrow \mathbb{C}$ such that $Z(p)=z_p$ for $p\in
U_0$. Let $\textup{Fix}{(f_0)}$ be all repelling points of $f_0$.
Then the map $h:\mathcal{V}\times \textup{Fix}{(f_0)}\rightarrow
\mathbb{C}$ defined by $h(p,z_0)=Z(p)$ is a holomorphic motion.
Notice that $\mathbb{S}=\overline{\textup{Fix}{(f_0)}}$, by Theorem
\ref{e-lemma}, there is an extension of $h$, say
$H:\mathcal{V}\times \mathbb{S}\rightarrow\mathbb{C}$. It's obvious
that $H(p, \mathbb{S})$ is a connected component of $J(f_p)$.

To finish, we show $H(p, \mathbb{S})=\partial B_p$ for all $p\in
\mathcal{V}$. By the uniqueness of the holomorphic motion of
hyperbolic Julia sets, it suffices to show $H(p,
\mathbb{S})=\partial B_p$ for small and real parameter $p\in
(0,\epsilon)$, where $\epsilon>0$.

Under the small perturbation $f_p$ with $p\in(0,\epsilon)$, the
fixed point $z_0=1$ of $f_0$ becomes the repelling fixed points
$z_p$ of $f_p$, which is real and close to $1$. The map $f_p$ has
exactly two real and positive fixed points. One is $z_p$ and the
other is $z^*_p$, which is near $0$.  It's obvious that $z_{p}$ is
the landing point of the zero external ray of $f_p$. So
$H(p,1)=z_{p}\in\partial B_p$. This implies $H(p,
\mathbb{S})=\partial B_p$ for all $p\in (0,\epsilon)$.
\end{proof}

The boundary $\partial B_p$ is a `repeller' of the map $f_p$ 
in the sense of Ruelle \cite{Rue}.

\begin{thm}[{Ruelle,\cite{Rue}}]\label{Rulle}
If the repeller $J_\lambda$ of a family of real analytic conformal
maps $f_\lambda$ depends analytically on $\lambda$, then the
Hausdorff dimension of $J_\lambda$ depends real analytically on
$\lambda$.
\end{thm}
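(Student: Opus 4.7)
The plan is to combine Bowen's pressure formula with analytic perturbation theory for the Ruelle--Perron--Frobenius transfer operator. First I would express the Hausdorff dimension $d(\lambda)$ of the conformal repeller $J_\lambda$ as the unique positive $s$ solving
\[
P_\lambda(-s\log|f_\lambda'|)=0,
\]
where $P_\lambda$ is the topological pressure of $f_\lambda|_{J_\lambda}$. This is Bowen's classical dimension formula for conformal expanding repellers and reduces the theorem to showing that the pressure function is jointly real-analytic in $(\lambda,s)$ and that $\partial P/\partial s\ne 0$ at the root, so that the implicit function theorem applies.

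Next I would realize $e^{P_\lambda(-s\log|f_\lambda'|)}$ as the leading eigenvalue of the transfer operator
\[
(L_{\lambda,s}\phi)(z)=\sum_{w\in f_\lambda^{-1}(z)}|f_\lambda'(w)|^{-s}\,\phi(w).
\]
Because $f_\lambda$ is real-analytic and uniformly expanding on $J_\lambda$, one may let $L_{\lambda,s}$ act on a space of functions holomorphic on a shrinking complex tubular neighborhood of $J_\lambda$; on such a space it is a nuclear operator with a simple, positive, isolated leading eigenvalue (spectral gap). The inverse branches of $f_\lambda$ depend analytically on $\lambda$ (by the implicit function theorem together with expansion), and the weight $|f_\lambda'|^{-s}$ is entire in $s$, so the operator-valued map $(\lambda,s)\mapsto L_{\lambda,s}$ is jointly real-analytic.

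Kato's perturbation theory for simple isolated eigenvalues then transfers the analytic dependence to the leading eigenvalue, and hence to the pressure. Since the equilibrium state $\mu_{s,\lambda}$ is supported on the strictly expanding set $J_\lambda$, one has
\[
\frac{\partial P}{\partial s}=-\int\log|f_\lambda'|\,d\mu_{s,\lambda}<0,
\]
and the implicit function theorem applied to Bowen's equation produces a real-analytic function $s(\lambda)=d(\lambda)$. The main obstacle is the careful choice of Banach space: one needs $L_{\lambda,s}$ to form a genuinely analytic family with a \emph{uniform} spectral gap throughout a neighborhood of each parameter. Exploiting the complex-analytic nature of $f_\lambda$ by working with holomorphic functions on a tubular neighborhood of $J_\lambda$ is what simultaneously secures nuclearity, the uniform spectral gap, and the analytic perturbation of the leading eigenvalue.
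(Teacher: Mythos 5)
The paper does not prove this statement at all --- it is quoted verbatim as an external result from Ruelle's paper \cite{Rue} and used as a black box. Your outline (Bowen's equation $P_\lambda(-s\log|f_\lambda'|)=0$, realization of the pressure as the logarithm of the leading eigenvalue of the transfer operator acting on holomorphic functions on a complex neighborhood of $J_\lambda$, nuclearity and spectral gap, Kato perturbation, and the implicit function theorem using $\partial P/\partial s=-\int\log|f_\lambda'|\,d\mu<0$) is precisely the strategy of Ruelle's original proof, and it is correct in all essentials. The one technical point you gloss over is that the weight $|f_\lambda'(w)|^{-s}$ is not holomorphic in $w$, so to get an analytic family of operators on a space of holomorphic functions one must complexify, e.g.\ by writing $|f_\lambda'|^{-s}=(f_\lambda')^{-s/2}\,(\overline{f_\lambda'})^{-s/2}$ and letting the operator act on functions of two complex variables $(w,\widetilde w)$ near the diagonal $\widetilde w=\overline w$ (and similarly complexifying a real-analytic parameter $\lambda$); this is exactly the device Ruelle uses, and without it the claim that $(\lambda,s)\mapsto L_{\lambda,s}$ is an analytic operator-valued map on a space of holomorphic functions does not literally make sense.
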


We define a function $H:\mathcal{V}\rightarrow \mathbb{R}^+$ by
$H(p)=\textup{dim}_H(\partial B_p)$. We first derive some basic
properties of $H$. The fact $\partial B_0=\mathbb{S}$ implies
$H(0)=1$. By Ruelle's theorem, we know that $H$ is a real analytic
function. Thus when $p$ is near $0$, we have
\begin{equation}
H(p)=\sum_{s,t\geq0}a_{st}p^s\overline{p}^t, \ a_{00}=1.
\end{equation}

 It
follows from  $\overline{f_p(\bar{z})}=f_{\bar{p}}(z)$ and $
f^{\circ 2}_{{e}^{2\pi i/(Q-1)}p}({e}^{\pi i/(Q-1)} z)=
 {e}^{\pi i/(Q-1)} f^{\circ 2}_{p}(z)$  that
 $$\overline{H(p)}=H(p)=H(\bar{p}),\ \  H({e}^{2\pi i/(Q-1)}p)=H(p).$$
So the coefficients satisfy
$$a_{st}=\overline{a_{st}}=a_{ts}, \ a_{st}=a_{st}e^{2\pi i(s-t)/(Q-1)}.$$
In particular, if $s-t\neq0 {\rm mod} (Q-1)$, then $a_{st}=0$. Thus
we have

\begin{equation*}
H(p)=\begin{cases}
 1+
a_{20}(p^2+\bar{p}^2)+a_{11}|p|^2+\mathcal{O}(|p|^4),\ \  &\text{ if } Q=3,\\
1+a_{11}|p|^2+\mathcal{O}(|p|^3),\ \ &\text{ if } Q=4,\\
1+a_{11}|p|^2+\mathcal{O}(|p|^4), \ \ &\text{ if } Q\geq 5.
\end{cases}
\end{equation*}

To compute the Hausdorff dimension of $\partial B_p$, we need the
following result (See \cite{Fal}, Theorem 9.1, Propositions 9.6 and
9.7)

\begin{thm}[{Falconer, \cite{Fal}}]\label{IFS} Let $S_1,\cdots,S_m$
be contractive maps on a closed subset $D$ of $\mathbb{R}^n$ such
that $|S_i(x)-S_i(y)|\leq c_i|x-y|$ with $c_i<1$.  Then

$(1)$ There there exists a unique non-empty compact set $J$ such
that $J=\bigcup_{\,i=1}^{\,m} S_i(J)$.

$(2)$ The Hausdorff dimension $H(J)$ of $J$ satisfies $H(J)\leq s$,
where $\sum_{i=1}^m c_i^s=1$.

$(3)$ If we require further $|S_i(x)-S_i(y)|\geq b_i|x-y|$  for
$i=1,\cdots,m$, then $H(J)\geq \widetilde{s}$, where $\sum_{i=1}^m
b_i^{\widetilde{s}}=1$.
\end{thm}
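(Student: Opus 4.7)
The plan is to handle the three parts in order, treating (1) and (2) via standard fixed-point and covering arguments and (3) via the mass distribution principle, where the genuine difficulty lies.

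For part (1), I would work in the hyperspace $\mathcal{K}(D)$ of non-empty compact subsets of $D$ equipped with the Hausdorff metric $d_H$, which is a complete metric space. Define the Hutchinson operator $F\colon \mathcal{K}(D)\to\mathcal{K}(D)$ by $F(K)=\bigcup_{i=1}^m S_i(K)$. A short verification shows $d_H(F(K_1),F(K_2))\leq (\max_i c_i)\,d_H(K_1,K_2)$, so $F$ is a strict contraction. The Banach fixed point theorem then produces the unique non-empty compact set $J$ with $F(J)=J$.

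For the upper bound in (2), I would iterate the self-similar identity to write $J=\bigcup_{\mathbf{i}\in\{1,\dots,m\}^n} S_{i_1}\circ\cdots\circ S_{i_n}(J)$ for every $n\geq 1$. Each piece has diameter at most $c_{i_1}\cdots c_{i_n}\cdot\mathrm{diam}(J)$, and these diameters are uniformly bounded by $(\max_i c_i)^n\mathrm{diam}(J)$, which shrinks to $0$ as $n\to\infty$, producing covers of arbitrarily small mesh. The sum of the $s$-th powers of the diameters is bounded by $\bigl(\sum_i c_i^s\bigr)^n\mathrm{diam}(J)^s=\mathrm{diam}(J)^s$ using the hypothesis $\sum c_i^s=1$. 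Passing to the limit in the definition of Hausdorff measure yields $\mathcal{H}^s(J)\leq\mathrm{diam}(J)^s<\infty$, hence $\dim_H(J)\leq s$.

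Part (3) is the delicate step and I expect it to be the main obstacle. The lower bi-Lipschitz bound alone does not quite suffice; a separation condition (such as the open set condition) is standardly needed, and in the application to $\partial B_p$ it is furnished by the geometry of the Cantor-circle Julia set. Given such a condition, I would construct a Bernoulli-type probability measure $\mu$ on $J$ by pushing forward the product measure on the symbol space $\{1,\dots,m\}^{\mathbb{N}}$ with weights $p_i=b_i^{\widetilde{s}}$ via the natural coding map $\pi(\mathbf{i})=\lim_n S_{i_1}\circ\cdots\circ S_{i_n}(x_0)$. The key estimate is a local bound $\mu(B(x,r))\leq C\,r^{\widetilde{s}}$: one identifies cylinders $S_{\mathbf{i}}(J)$ whose geometric scale is comparable to $r$, and uses the separation hypothesis together with the lower bi-Lipschitz bounds to control the number of such cylinders meeting $B(x,r)$. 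The mass distribution principle then delivers $\dim_H(J)\geq\widetilde{s}$. Verifying the local measure estimate, in particular handling overlaps via the separation condition, is the technically demanding step.
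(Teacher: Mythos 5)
The paper gives no proof of this statement --- it is quoted verbatim (modulo a dropped hypothesis, see below) from Falconer \cite{Fal}, Theorem 9.1 and Propositions 9.6 and 9.7 --- so there is no internal argument to compare against; your outline simply reconstructs the standard proofs from that reference, and it does so correctly: the Hutchinson operator and Banach fixed point theorem for (1), the cover by cylinder sets $S_{i_1}\circ\cdots\circ S_{i_n}(J)$ with $\sum_{\mathbf{i}}(c_{i_1}\cdots c_{i_n})^{s}=\bigl(\sum_i c_i^{s}\bigr)^{n}=1$ for (2), and the Bernoulli measure with weights $b_i^{\widetilde{s}}$ together with the mass distribution principle for (3). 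Your one substantive observation is also correct and worth recording: part (3) is false as literally stated (take all $S_i$ equal to $x\mapsto x/2$ on $\mathbb{R}$ with $m\geq 2$; the attractor is a point but $\widetilde{s}=\log m/\log 2>0$), and Falconer's Proposition 9.7 carries the additional hypothesis that the union $J=\bigcup_{i}S_i(J)$ is disjoint, which the paper's quotation suppresses. In the paper's application the pieces $S_i(\partial B_p)$ are arcs meeting only at finitely many endpoints (this is why $w_p$ is split into $w_p^{+}$ and $w_p^{-}$), and since removing a countable set does not change Hausdorff dimension, the ``just touching'' situation still yields the lower bound; but the theorem as displayed should include the separation hypothesis, exactly as you flagged.
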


Now, we have

\begin{lema} For any $p\in \mathcal{V}$, the Hausdorff dimension $D=H(p)$ of $\partial B_p$ is determined
by the following equation
\begin{equation}\label{O(1)=}
\sum_{z\in\,\textup{Fix}(f_p^{\circ n})\cap \partial B_p}
|(f_p^{\circ n})'(z)|^{-D}=\mathcal{O}(1).
\end{equation}
\end{lema}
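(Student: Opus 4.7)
The plan is to present $\partial B_p$ as the attractor of a conformal iterated function system formed by the univalent inverse branches of $f_p^{\circ n}$, and then to read off the estimate \eqref{O(1)=} directly from Theorem \ref{IFS}.

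First I would construct a Markov coding. Since $p \in \mathcal{V}$ is a hyperbolic parameter and $\partial B_p$ is a quasicircle on which $f_p$ acts as a degree-$Q$ expanding covering (Lemma \ref{holo-motion}), pick a base point $w_0 \in \partial B_p$ and let $J_1, \ldots, J_Q$ be the closed arcs into which $f_p^{-1}(w_0) \cap \partial B_p$ partitions the curve. Because $\partial B_p$ has positive distance to the post-critical set, each restriction $f_p\colon J_i \to \partial B_p$ extends to a univalent inverse branch $g_i$ on a common neighborhood $N$ of $\partial B_p$ with $\overline{g_i(N)} \subset N$. For every word $\omega = (\omega_1, \ldots, \omega_n) \in \{1, \ldots, Q\}^n$, the composition $S_\omega := g_{\omega_1} \circ \cdots \circ g_{\omega_n}$ is a univalent contraction of $N$; its unique fixed point $z_\omega$ lies in $\partial B_p$ and satisfies $|S_\omega'(z_\omega)| = |(f_p^{\circ n})'(z_\omega)|^{-1}$, while the assignment $\omega \mapsto z_\omega$ bijects $\{1,\ldots,Q\}^n$ with $\textup{Fix}(f_p^{\circ n}) \cap \partial B_p$.

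Next I would invoke the Koebe distortion theorem on a slightly larger neighborhood onto which the branches still extend univalently; this yields a constant $K \geq 1$, independent of $n$ and $\omega$, with $K^{-1} \leq |S_\omega'(w)|/|S_\omega'(w')| \leq K$ for all $w, w' \in \partial B_p$. Writing $c_\omega := \sup_{\partial B_p} |S_\omega'|$ and $b_\omega := \inf_{\partial B_p} |S_\omega'|$, we get $b_\omega \leq c_\omega \leq K b_\omega$, and both are comparable to $|(f_p^{\circ n})'(z_\omega)|^{-1}$. Applied to the IFS $\{S_\omega\}_{|\omega|=n}$, whose attractor is $\partial B_p$ (and which satisfies the open set condition since the arcs $S_\omega(\partial B_p)$ have pairwise disjoint interiors), Theorem \ref{IFS} gives $\tilde s_n \leq D \leq s_n$ where $s_n, \tilde s_n$ are defined by $\sum_\omega c_\omega^{s_n} = 1 = \sum_\omega b_\omega^{\tilde s_n}$. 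The distortion sandwich then forces
\[
1 \;\leq\; \sum_\omega c_\omega^D \;\leq\; K^D,
\]
and translating back via $c_\omega \asymp |(f_p^{\circ n})'(z_\omega)|^{-1}$ produces precisely the identity \eqref{O(1)=}.

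The main technical obstacle is verifying the uniform Koebe distortion: one must exhibit a single neighborhood of $\partial B_p$ onto which every $g_i$, and hence every $S_\omega$, extends univalently, with the extension lying in a definite Koebe annulus around $\partial B_p$. This is where the hyperbolicity of $f_p$ on $\mathcal{V}$ enters essentially, through the fact that $\partial B_p$ maintains a positive distance from the post-critical set as $p$ varies over compact subsets of $\mathcal{V}$.
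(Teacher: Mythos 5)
Your argument follows essentially the same route as the paper's: decompose $\partial B_p$ into the $Q^n$ arcs cut out by the inverse branches of $f_p^{\circ n}$, use Koebe distortion to compare each branch's contraction ratio with $|(f_p^{\circ n})'(\zeta)|^{-1}$ at the fixed point it contains, and sandwich $D$ via Theorem \ref{IFS}. The only slip is the claim that $\omega\mapsto z_\omega$ bijects $\{1,\dots,Q\}^n$ with $\textup{Fix}(f_p^{\circ n})\cap\partial B_p$ --- there are $Q^n$ words but only $Q^n-1$ fixed points on $\partial B_p$, a one-term discrepancy the paper removes explicitly by subtracting $|(f_p^{\circ n})'(w_p)|^{-D}$; since that term is $\mathcal{O}(1)$ it does not affect the conclusion.
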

\begin{proof} Let $w_p\in \partial B_p$ be the landing point of the zero external
ray of $f_p$. We can split $w_p$ into two point $w_p^+$ and $w_p^-$
and view $\partial B_p$ as a closed segment with extreme points
$w_p^+$ and $w_p^-$. The map $f^{\circ n}_{p}:\partial
B_p\rightarrow
\partial B_p$ has $Q^n$ inverse branches, say $S_1,\cdots,S_{Q^n}$,
each maps $\partial B_p$ to a closed segment such that their images
are  in anticlockwise order. Moreover, $\partial B_p=\bigcup
S_i(\partial B_p)$. In particular, both $S_1(\partial B_p)$ and
$S_{Q^n}(\partial B_p)$ contain  $w_p$ as an end point, for
$1<j<Q^n$, $S_j(\partial B_p)$ contains exactly one fixed point of
$f^{\circ n}_{p}$.
By  Koebe distortion theorem and the fact that $\partial B_p$ is a
quasicircle, there exist two constants $C_1, C_2$ both independent
of $n$, such that
$$\frac{C_1}{|(f_p^{\circ
n})'(\zeta)|}\leq
\frac{|S_i(x)-S_i(y)|}{|x-y|}\leq\frac{C_2}{|(f_p^{\circ
n})'(\zeta)|},~\forall\,1\leq i\leq Q^n,\  x,y\in S_i(\partial
B_p),$$ where $\zeta$ is the unique fixed point of $f^{\circ n}_{p}$
in $S_i(\partial B_p)$.

By Theorem \ref{IFS}, we have $s_1\leq D\leq s_2$, where $\sum_\zeta
C_j^{s_j}|(f_p^{\circ n})'(\zeta)|^{-s_j}=1, \ j=1,2.$ It turns out
that when $n$ is large, the sum $\sum_\zeta |(f_p^{\circ
n})'(\zeta)|^{-D}$ is a number between $C_1^{-D}$ and $C_2^{-D}$ and
 $$\sum_{z\in\,\text{Fix}(f_p^{\circ n})\cap
\partial B_p} |(f_p^{\circ
n})'(z)|^{-D}=\sum_\zeta|(f_p^{\circ n})'(\zeta)|^{-D}-|(f_p^{\circ
n})'(w_p)|^{-D}=\mathcal{O}(1).$$ The proof is completed.
\end{proof}

\noindent\textit{Proof of Theorem \ref{main}.} Note that when $p=0$,
the Julia set $J(f_p)$ is the unit circle which can be parameterized
by $z(t)=e^{2\pi it}$ such that
\begin{equation}\label{f_p(z_t)}
f_p(z(t))=z(Qt).
\end{equation}
For small $p\neq 0$, the restriction $f_p:\partial
B_p\rightarrow\partial B_p$ is a covering map with degree $Q$. Then
$\partial B_p$ can parameterized such that (\ref{f_p(z_t)}) holds
since $\partial B_p$ is homeomorphic to the unit circle. By Lemma
\ref{holo-motion}, we know that the point $z(t)$ on $\partial B_p$
moves holomorphically on $p$. This means that, in a neighborhood of
0, we can expand $z(t)$ by
\begin{equation}\label{z_t}
z(t)=e^{2\pi it}(1+p\,U_1(t)+p^2\,U_2(t)+\mathcal{O}(p^3)),
\end{equation}
where $U_m(t)$ satisfies $U_m(t+1)=U_m(t)$ for $m\geq 1$.
Substituting (\ref{z_t}) into (\ref{f_p(z_t)}), then comparing the
same order in $p$, we have the following equations
\begin{eqnarray}
U_1(Qt)-QU_1(t) &=& e^{-2\pi i (2Q)t},\label{equation_1}\\
U_2(Qt)-QU_2(t) &=& \DF{Q(Q-1)}{2}U_1^2(t)-e^{-2\pi i
(2Q)t}QU_1(t).\label{equation_2}
\end{eqnarray}
It is easy to verify the linear functional equation $
\phi(Qt)-Q\phi(t) = e^{-2\pi i t} $ has the solution
\begin{equation}
\phi(t) = -\DF{1}{Q}\sum_{l=0}^{\infty}Q^{-l}e^{-2\pi i Q^l t}.
\end{equation}
Hence we can solve the equations (\ref{equation_1}) and
(\ref{equation_2}) by
\begin{eqnarray}
U_1(t) &=& \phi(2Qt),\label{solution_1}\\
U_2(t) &=&
\DF{Q(Q-1)}{2}\sum_{l_1,\,l_2=1}^{\infty}Q^{-(l_1+l_2)}\phi(2(Q^{l_1}+Q^{l_2})t)
+Q\sum_{l=1}^{\infty}Q^{-l}\phi(2(Q^l+Q)t).\label{solution_2}
\end{eqnarray}

Actually, the higher order terms $U_m(t)$ with $m\geq 3$ can also be
calculated by induction. But it will be extremely complicated.

Notice that the fixed point of $f_p^{\circ n}$ forms the following
set
\begin{equation}
\text{Fix}(f_p^{\circ n})\cap \partial
B_p=\{z(t_j):\,t_j=j/(Q^n-1),j=0,1,\cdots,Q^n-2\}.
\end{equation}
 Following\cite{WBK}, it is
convenient to introduce the \textit{average notation}
\begin{equation}
\langle G(t)\rangle_n=\DF{1}{Q^n-1}\sum_{j=0}^{Q^n-2}G(t_j).
\end{equation}
A very useful property of this average is
\begin{equation}\label{prpt_of_average}
{\langle e^{2\pi i m t}\rangle_n}=\left\{
\begin{array}{ll}
 1~~~~~\text{if}~m\equiv 0~\text{mod}~Q^n-1,\\
 0~~~~~\text{otherwise.}
\end{array}
\right.
\end{equation}
By the fact that
\begin{equation}
(f_p^{\circ
n})'(z(t_j))=\prod_{m=0}^{n-1}f_p'(z(Q^mt_j))=Q^n\prod_{m=0}^{n-1}\left(z^{Q-1}(Q^mt_j)-\DF{p}{z^{Q+1}(Q^mt_j)}\right),
\end{equation}
we can write (\ref{O(1)=}) as
\begin{equation}\label{W_n(D)}
\mathcal{O}(1)=Q^{-nD}(Q^n-1)\left\langle
\prod_{m=0}^{n-1}\left|z^{Q-1}(Q^mt)-\DF{p}{z^{Q+1}(Q^mt)}\right|^{-D}\right\rangle_n.
\end{equation}

The calculation in Appendix shows that for all sufficiently large
$n$, we have
\begin{equation}\label{result}
\mathcal{O}(1)=Q^{-nD}(Q^n-1)(1+D^2 n|p|^2+\mathcal{O}(np^3)).
\end{equation}
Fix some large $n$, when $p$ is small enough, we have
\begin{equation}\label{ff_result}
   \mathcal{O}(1)
 =\exp\Big(n(-(D-1)\log Q + D^2 |p|^2)\Big).
\end{equation}
This means that
\begin{equation}
D=1+\DF{|p|^2}{\log Q}+\mathcal{O}(|p|^3),
\end{equation}
which is the required formula in the main theorem. \hfill $\Box$

\section{Appendix}

This section will devote to prove (\ref{result}). Firstly, we do
some simplifications on notations. We use $z_m$, $U_{1,m}$ and
$U_{2,m}$ to denote $z(Q^m t)$, $U_1(Q^m t)$ and $U_2(Q^m t)$
respectively. Let $\sigma=e^{2\pi i t}$, by (\ref{z_t}), we have
\begin{equation}
\begin{split}
   &~ \left|z_m^{Q-1}-p/z_m^{Q+1}\right|=\left|(1+V_m)^{Q-1}-\sigma^{-2Q^{m+1}}p/(1+V_m)^{Q+1}\right|\\
 = &~ \left|1+(Q-1)V_m+(Q-1)(Q-2)V_m^2/2-\sigma^{-2Q^{m+1}}p\,(1-(Q+1)V_m)+\mathcal{O}(p^3)\right|,
\end{split}
\end{equation}
where $V_m=U_{1,m}\,p+U_{2,m}\,p^2+\mathcal{O}(p^3)$. So
\begin{equation}
\begin{split}
   &~ \left|z_m^{Q-1}-p/z_m^{Q+1}\right|^{-\frac{D}{2}}
      = \left|1+\left[(Q-1)U_{1,m}-\sigma^{-2Q^{m+1}}\right]p\right.\\
   &~ \left.+\left[(Q-1)(Q-2)U_{1,m}^2/2+(Q+1)\sigma^{-2Q^{m+1}}U_{1,m}+(Q-1)U_{2,m}\right]p^2+\mathcal{O}(p^3)\right|^{-\frac{D}{2}}\\
 = &~ \left|1-\frac{D}{2}A_m p+\frac{D}{8}B_m p^2+\mathcal{O}(p^3)\right|,
\end{split}
\end{equation}
where
\begin{eqnarray}
A_m &=& (Q-1)\,U_{1,m}-\sigma^{-2Q^{m+1}},\label{A_m}\\
B_m &=& (Q-1)\,(D(Q-1)+2)U_{1,m}^2-2(D(Q-1)+4Q)\,\sigma^{-2Q^{m+1}}U_{1,m}\label{B_m}\\
    & & -4(Q-1)\,U_{2,m}+(D+2)\,\sigma^{-4Q^{m+1}}. \nonumber
\end{eqnarray}
Then we have
\begin{equation}\label{formu}
\begin{split}
   &~ \left|z_m^{Q-1}-p/z_m^{Q+1}\right|^{-D}
      = \left(1-\frac{D}{2}A_m p+\frac{D}{8}B_m p^2\right)
        \left(1-\frac{D}{2}\overline{A}_m \overline{p}+\frac{D}{8}\overline{B}_m \overline{p}^2\right)+\mathcal{O}(p^3)\\
 = &~ 1-\frac{D}{2}(A_m p+\overline{A}_m \overline{p})+\frac{D^2}{4}|p|^2 A_m\overline{A}_m
      +\frac{D}{8}(B_m p^2+\overline{B}_m \overline{p}^2)+\mathcal{O}(p^3).
\end{split}
\end{equation}

\begin{lema}\label{integer1}
Let $u,v\in\mathbb{N}$, for any large $n$, then $(1)$
$2Q^v/(Q^n-1)\not\equiv 0~\textup{mod}~1$; $(2)$
$2Q^v(Q^u+1)/(Q^n-1)\not\equiv 0~\textup{mod}~1$.
\end{lema}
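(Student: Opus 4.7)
The plan is to reduce both divisibility statements to small-modulus estimates via a coprimality observation, then use that $u,v$ are fixed while $n$ is allowed to grow.

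First I would observe that any prime divisor of $Q$ divides $Q^n$ but not $Q^n-1$, so $\gcd(Q,Q^n-1)=1$ and therefore $\gcd(Q^v,Q^n-1)=1$ for every $v\in\mathbb{N}$. This is the one algebraic input needed for both parts; everything else is bookkeeping.

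For part $(1)$, the claim $2Q^v/(Q^n-1)\in\mathbb{Z}$ is equivalent to $(Q^n-1)\mid 2Q^v$. Using the coprimality above, this forces $(Q^n-1)\mid 2$. Since $Q\geq 3$, we have $Q^n-1\geq 3^n-1$, which exceeds $2$ as soon as $n\geq 1$ (in particular for all large $n$), so the divisibility is impossible and the quotient is never an integer. For part $(2)$, the same coprimality reduces $(Q^n-1)\mid 2Q^v(Q^u+1)$ to $(Q^n-1)\mid 2(Q^u+1)$. Since $u$ is fixed, $2(Q^u+1)$ is a fixed positive integer, whereas $Q^n-1\to\infty$ with $n$; for any $n$ with $Q^n-1>2(Q^u+1)$ the divisibility fails, giving the conclusion for all sufficiently large $n$.

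There is no genuine obstacle here: the lemma is a purely arithmetic fact, and the only subtlety is making sure the ``large $n$'' in the statement is interpreted as depending on the fixed parameters $u,v$ (which is how the lemma will be applied later, when $u,v$ range over finitely many indices appearing in the truncated expansions $A_m,B_m$ and in the series defining $\phi$ after truncation to order $p^3$). I would state this dependence explicitly in the proof so that later invocations of the lemma in the appendix calculation are unambiguous.
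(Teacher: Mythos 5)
Your coprimality observation and your treatment of part $(1)$ coincide with the paper's: $(Q^v,Q^n-1)=1$ reduces the question to $(Q^n-1)\mid 2$, which fails for every $n\geq 1$ independently of $v$. For part $(2)$, however, there is a genuine gap. Your reduction to $(Q^n-1)\mid 2(Q^u+1)$ only rules out integrality for those $n$ with $Q^n-1>2(Q^u+1)$, i.e.\ roughly $n>u$, so your threshold for ``large $n$'' genuinely depends on $u$. What the paper needs, and what its proof delivers, is the uniform statement: for every sufficiently large $n$ (threshold depending only on $Q$), the conclusion holds for \emph{all} $u,v\in\mathbb{N}$ simultaneously. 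The missing idea is to reduce the exponent $u$ modulo $n$: writing $u=ns+r$ with $0\leq r<n$ and using $Q^{ns}\equiv 1 \pmod{Q^n-1}$ gives
\[
2Q^v(Q^u+1)\equiv 2Q^v(Q^r+1) \pmod{Q^n-1},
\]
and now $0<2(Q^r+1)\leq 2(Q^{n-1}+1)<Q^n-1$ for all large $n$, uniformly in $u$; the coprimality of $Q^v$ with $Q^n-1$ then finishes the argument.

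Your stated justification for the weaker, non-uniform reading --- that $u,v$ range over only finitely many indices in the later application --- is not correct. In Corollary \ref{no_p^k} the averages $\langle A_m\rangle_n$, $\langle A_m A_k\rangle_n$ and $\langle B_m\rangle_n$ are computed by expanding $U_1$ and $U_2$ through $\phi(t)=-\frac{1}{Q}\sum_{l=0}^{\infty}Q^{-l}e^{-2\pi i Q^l t}$, which is an \emph{infinite} series in $l$ (and $U_2$ involves double infinite sums); the truncation at order $p^3$ is in powers of $p$, not in these summation indices. Hence, for a single fixed large $n$, one must know that $2Q^v\not\equiv 0$ and $2Q^v(Q^u+1)\not\equiv 0 \pmod{Q^n-1}$ as $u,v$ range over an infinite index set. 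Your version of part $(2)$ does not deliver this, and the mod-$n$ reduction of $u$ is exactly the step that does.
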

\begin{proof}
Since $(Q,Q^n-1)=1$, it follows that $(Q^v,Q^n-1)=1$. This means
that $2Q^v/(Q^n-1)$ can not be an integer since $0< 2<Q^n-1$ for
large $n$.

For the second assertion, suppose that $u=ns+r$ for $0\leq r<n$,
then
\begin{equation*}
\frac{2Q^v(Q^u+1)}{Q^n-1}\equiv 2Q^v\,\frac{Q^{ns}(Q^r-1)+2}{Q^n-1}
\equiv \frac{2Q^v(Q^r+1)}{Q^n-1} ~\text{mod}~1.
\end{equation*}
Since $(Q^v,Q^n-1)=1$, this means that $2Q^v(Q^u+1)/(Q^n-1)$ can not
be an integer because $0< 2(Q^r+1)<Q^n-1$ for large $n$.
\end{proof}

By Lemma \ref{integer1}, combine the average property of
(\ref{prpt_of_average}), it is easy to verify the following
\begin{cor}\label{no_p^k}
$\langle A_m\rangle_n=0$, $\langle A_mA_k\rangle_n=0$ and $\langle
B_m\rangle_n=0$ for $0\leq m,k\leq n-1$.
\end{cor}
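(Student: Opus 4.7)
The plan is to expand each of $A_m$, $A_m A_k$, and $B_m$ as an absolutely convergent linear combination of pure exponentials in $t$ of the form $\sigma^{-e} = e^{-2\pi i e t}$, and then use the orthogonality property (\ref{prpt_of_average}) to kill every term. Once every exponent $e$ produced is shown to satisfy $e \not\equiv 0 \pmod{Q^n-1}$ for large $n$, the three averages vanish automatically. Lemma \ref{integer1} is tailor-made for this task.

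For $\langle A_m\rangle_n$, formula (\ref{solution_1}) gives $U_{1,m} = \phi(2Q^{m+1}t) = -\tfrac{1}{Q}\sum_{l\geq 0} Q^{-l}\sigma^{-2Q^{l+m+1}}$, so by (\ref{A_m}) the quantity $A_m$ is an absolutely convergent sum of pure exponentials $\sigma^{-2Q^v}$ with $v\geq 1$, and Lemma \ref{integer1}(1) supplies $\langle\sigma^{-2Q^v}\rangle_n = 0$ for each such $v$.

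For $\langle A_m A_k\rangle_n$, multiplying out produces only terms of shape $\sigma^{-2(Q^a+Q^b)}$ with $a,b\geq 1$. Taking $a\leq b$ without loss, factor $Q^a+Q^b = Q^a(Q^{b-a}+1)$, which places the exponent in the format governed by Lemma \ref{integer1}(2) with $v = a$ and $u = b-a\geq 0$. The diagonal case $u=0$ produces the constant $4Q^a$, still nonzero modulo $Q^n-1$ for large $n$ because $\gcd(Q^a, Q^n-1) = 1$ and $4<Q^n-1$; the same coprimality argument used in the proof of Lemma \ref{integer1} dispatches it.

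For $\langle B_m\rangle_n$, the four summands in (\ref{B_m}) fall into the same template. The single exponential $\sigma^{-4Q^{m+1}}$ and the terms $\sigma^{-2Q^{m+1}}U_{1,m}$ and $U_{1,m}^2$ reduce directly to $\sigma^{-2(Q^a+Q^b)}$ exponentials and are handled as above. The term $U_{2,m}$, via (\ref{solution_2}) and the series for $\phi$, expands as an absolutely convergent triple sum of pure exponentials $\sigma^{-2Q^j(Q^a+Q^b)}$ with $j\geq 0$ and $a,b\geq 1$; factoring the exponent as $2Q^{j+\min(a,b)}(Q^{|b-a|}+1)$ again reduces to Lemma \ref{integer1}(2). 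The only real obstacle is clerical: one must be careful in tracking indices when collecting terms, especially in $U_{2,m}$, to confirm that every exponent produced can indeed be put in the form $2Q^v(Q^u+1)$ with $v\geq 1$ and $u\geq 0$.
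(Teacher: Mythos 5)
Your proposal is correct and is exactly the verification the paper leaves implicit: the paper proves the corollary by the same route, simply asserting that it ``is easy to verify'' from Lemma \ref{integer1} together with the orthogonality property (\ref{prpt_of_average}), which is precisely the expansion into exponentials $\sigma^{-2Q^v}$ and $\sigma^{-2Q^v(Q^u+1)}$ that you carry out. Your explicit handling of the diagonal exponent $4Q^a$ and of the $U_{2,m}$ triple sum fills in the details the paper omits, with no difference in method.
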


Now, by (\ref{formu}), we have
\begin{equation}\label{f_result_1}
\begin{split}
   &~
   \left\langle\prod_{m=0}^{n-1}\left|z_m^{Q-1}-p/z_m^{Q+1}\right|^{-D}\right\rangle_n
      =1+\frac{D^2}{4}|p|^2 \sum_{m,\,k=0}^{n-1}\langle
      A_m\overline{A}_k\rangle_n+\mathcal{O}(|p|^3).
\end{split}
\end{equation}

Substituting (\ref{A_m}) to $A_m\overline{A}_k$, we have
\begin{equation}\label{A_m_A_k_expand}
\begin{split}
      \langle A_m\overline{A}_k\rangle_n
   =&~ (Q-1)^2\langle U_{1,m}\overline{U}_{1,k}\rangle_n+\langle \sigma^{2Q(Q^k-Q^m)}\rangle_n\\
    & -(Q-1)\,(\langle \sigma^{-2Q^{m+1}}\overline{U}_{1,k}\rangle_n+\langle\sigma^{2Q^{k+1}}U_{1,m}\rangle_n).
\end{split}
\end{equation}

\begin{lema}\label{integer}
Let $u\in\mathbb{N}$, $(Q^u-1)/(Q^n-1)$ is an integer if and only if
$u=ns$ for some $s\in\mathbb{N}$.
\end{lema}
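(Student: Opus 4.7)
The plan is to prove the two implications of Lemma \ref{integer} separately, using the standard factorization of $x^s-1$ together with the Euclidean-division trick that already appeared in the proof of Lemma \ref{integer1}.

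For the ``if'' direction, I would simply observe that if $u=ns$, then writing $x=Q^n$ gives
\begin{equation*}
Q^u-1 = x^s-1 = (x-1)(x^{s-1}+x^{s-2}+\cdots+1),
\end{equation*}
so that $Q^n-1$ divides $Q^u-1$. This step is immediate and requires no further work.

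For the ``only if'' direction, I would perform the Euclidean division $u=ns+r$ with $0\leq r<n$. The key algebraic identity is
\begin{equation*}
Q^u-1 = Q^r(Q^{ns}-1) + (Q^r-1).
\end{equation*}
By the ``if'' direction already established, $Q^n-1$ divides $Q^{ns}-1$, and by hypothesis $Q^n-1$ divides $Q^u-1$. Therefore $Q^n-1$ divides $Q^r-1$. Since $0\leq Q^r-1<Q^n-1$, this forces $Q^r-1=0$, i.e.\ $r=0$ and hence $u=ns$.

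There is no real obstacle here: the argument is an elementary number-theoretic observation. The only thing to be slightly careful about is the convention on $\mathbb{N}$, but in either convention (with or without $0$), the computation above closes the argument. The lemma will then be used in the sequel to determine exactly which terms in the averages $\langle\sigma^{2Q(Q^k-Q^m)}\rangle_n$ and related expressions in \eqref{A_m_A_k_expand} contribute to the leading-order coefficient of $|p|^2$ in \eqref{f_result_1}.
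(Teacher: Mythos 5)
Your proof is correct and follows essentially the same route as the paper: Euclidean division $u=ns+r$, reduction to $Q^n-1\mid Q^r-1$, and the size bound $0\leq Q^r-1<Q^n-1$ forcing $r=0$. The only cosmetic difference is that your identity $Q^u-1=Q^r(Q^{ns}-1)+(Q^r-1)$ lets you skip the coprimality observation $(Q^{ns},Q^n-1)=1$ that the paper invokes; both versions are fine.
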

\begin{proof}
The ``if" part is trivial, we only prove the ``only if" part.
Suppose that $u=ns+r$ for $0\leq r<n$, according to the assumption,
we have
\begin{equation*}
\frac{Q^u-1}{Q^n-1}\equiv \frac{Q^{ns}(Q^r-1)}{Q^n-1}~\text{mod}~1.
\end{equation*}
Since $(Q^{ns},Q^n-1)=1$, we conclude that $(Q^u-1)/(Q^n-1)$ is an
integer  if and only if $(Q^r-1)/(Q^n-1)$ is an integer, namely
$r=0$.
\end{proof}

From Lemma \ref{integer} and the property (\ref{prpt_of_average}) of
average notation, it follows that
\begin{equation}\label{U(1,m)BarU(1,k)}
\begin{split}
  &~ \sum_{m,\,k=0}^{n-1}\langle U_{1,m}\overline{U}_{1,k}\rangle_n=
      \DF{1}{Q^2}\sum_{m,\,k=0}^{n-1}\,\sum_{l_1,\,l_2=0}^{\infty}\DF{1}{Q^{\,l_1+l_2}}
     \left\langle \sigma^{-2Q(Q^{l_1+m}-Q^{l_2+k})}\right\rangle_n\\
 =&~ \DF{1}{Q^2}\sum_{m,\,k=0}^{n-1}\left(\sum_{l_1+m = l_2+k}\DF{1}{Q^{\,l_1+l_2}}
                     +\sum_{v\neq 0}~\sum_{l_1+m = l_2+k+nv}\DF{1}{Q^{\,l_1+l_2}}\right)\\
 =&~ \DF{1}{Q^2}\left(\sum_{m=0}^{n-1}\,\sum_{k=0}^{m}\,\sum_{l_1=0}^{\infty}\DF{1}{Q^{\,2l_1+m-k}}
                     +\sum_{k=0}^{n-1}\,\sum_{m=0}^{k-1}\,\sum_{l_2=0}^{\infty}\DF{1}{Q^{\,2l_2+k-m}}\right)\\
  &~+ \DF{1}{Q^2}\left(\sum_{v=1}^{+\infty}\,\sum_{m,\,k=0}^{n-1}\,\sum_{l_2=0}^{\infty}\DF{1}{Q^{\,2l_2+k-m+nv}}
                     +\sum_{v=-1}^{-\infty}\,\sum_{m,\,k=0}^{n-1}\,\sum_{l_1=0}^{\infty}\DF{1}{Q^{\,2l_1+m-k-nv}}\right)\\
 =&~ \DF{1}{Q^2-1}\left(\DF{Q+1}{Q-1}n+\mathcal{O}(1)\right)+\mathcal{O}(1)=\DF{n}{(Q-1)^2}+\mathcal{O}(1).
\end{split}
\end{equation}
Here we have used the following formulas
\begin{eqnarray}
\sum_{m=0}^{n-1}\,\sum_{k=0}^{m}\,\DF{1}{Q^{m-k}} &=& \DF{n Q}{Q-1}-\DF{Q-Q^{-(n-1)}}{(Q-1)^2}=\DF{n Q}{Q-1}+\mathcal{O}(1),\label{sum_large}\\
\sum_{m=0}^{n-1}\,\sum_{k=0}^{m-1}\,\DF{1}{Q^{m-k}} &=&
\DF{n}{Q-1}-\DF{Q-Q^{-(n-1)}}{(Q-1)^2}=\DF{n}{Q-1}+\mathcal{O}(1).\label{sum_small}
\end{eqnarray}

The calculation in (\ref{U(1,m)BarU(1,k)}) shows that the sum of the
case for $l_1+m\neq l_2+k$ is bounded above by a constant depending
only on $Q$ when $n$ tends to $\infty$, which we marked by
$\mathcal{O}(1)$. This observation is important in the following
similar calculations. Namely, the main ingredients of the result is
derived from the case for $l_1+m = l_2+k$.

Similar to the calculation in (\ref{U(1,m)BarU(1,k)}), we have
\begin{equation}\label{sigma_k,m}
\sum_{m,\,k=0}^{n-1}\left\langle\sigma^{2Q(Q^k-Q^m)}\right\rangle_n=n+\mathcal{O}(1)
\end{equation}
and
\begin{equation}\label{sigma_u}
\begin{split}
  &~ \sum_{m,\,k=0}^{n-1}\left(\left\langle \sigma^{-2Q^{m+1}}\overline{U}_{1,k}\right\rangle_n
           +\left\langle\sigma^{2Q^{k+1}}U_{1,m}\right\rangle_n\right)=
      -\DF{2}{Q}\sum_{m,\,k=0}^{n-1}\,\sum_{l=0}^{\infty}\DF{1}{Q^{\,l}}
     \left\langle \sigma^{-2Q(Q^{m}-Q^{k+l})}\right\rangle_n\\
 =&~ -\DF{2}{Q}\sum_{m=0}^{n-1}\,\sum_{k=0}^{m}\DF{1}{Q^{\,m-k}}+\mathcal{O}(1)=-\DF{2n}{Q-1}+\mathcal{O}(1).
\end{split}
\end{equation}
Combine (\ref{A_m_A_k_expand}), (\ref{U(1,m)BarU(1,k)}),
(\ref{sigma_k,m}) and (\ref{sigma_u}), we have
\begin{equation}\label{A_m_A_k}
\sum_{m,\,k=0}^{n-1}\langle
A_{m}\overline{A}_{k}\rangle_n=n+n+2n+\mathcal{O}(1)=4n+\mathcal{O}(1).
\end{equation}
From (\ref{f_result_1}), this means that
\begin{equation}
\left\langle\prod_{m=0}^{n-1}\left|z_m^{Q-1}-p/z_m^{Q+1}\right|^{-D}\right\rangle_n
=1+D^2 n|p|^2+\mathcal{O}(np^3).
\end{equation}
The proof of (\ref{result}) is completed.


\bibliography{References}

Fei YANG

\vskip0.1cm
\textsc{School of Mathematical Sciences, Fudan University,\\
Shanghai, 200433, P.R.China}

\vskip0.2cm \textit{E-mail address}:
\textsf{yangfei\rule[-2pt]{0.2cm}{0.5pt}math@163.com}

\vskip0.4cm

Xiaoguang WANG

\vskip0.1cm \textsc{School of Mathematical Sciences, Fudan University,\\
Shanghai, 200433, P.R.China}

\vskip0.2cm \textit{E-mail address}: \textsf{wxg688@163.com}

\end{document}